%
\RequirePackage{etoolbox}
\csdef{input@path}{{style/}{graphics/}}
\makeatletter
\input{arxiv-vmsta.cfg}
\makeatother
\documentclass[numbers,compress]{vmsta}
\usepackage{mathbh}
\usepackage{graphicx}

\volume{1}
\pubyear{2014}
\firstpage{195}
\lastpage{209}
\doi{10.15559/15-VMSTA19}

\startlocaldefs

\newcommand{\rrvert}{\vert}
\newcommand{\llvert}{\vert}
\urlstyle{rm}
\allowdisplaybreaks

\newcommand{\PP}{{\mathcal P}}
\newcommand{\R}{{\mathbb R}}
\newcommand{\XXX}{\mathfrak{X}}
\newcommand{\FFF}{\mathfrak{F}}
\newcommand{\Ii}{\mathbh{1}}

\DeclareMathOperator{\bcdot}{\boldsymbol{\cdot}}
\DeclareMathOperator{\pr}{\mathsf P}
\DeclareMathOperator{\M}{\mathsf E} \DeclareMathOperator{\D}{Var}
\DeclareMathOperator{\cov}{Cov}

\newtheorem{thm}{Theorem}
\theoremstyle{definition}
\newtheorem{example}{Example}
\newtheorem{experiment}{Experiment}
\endlocaldefs

\begin{document}
\begin{frontmatter}

\title{Testing hypotheses on moments by observations
from~a~mixture with varying concentrations}
\author{\inits{A.}\fnm{Alexey}\snm{Doronin}}\email{al\_doronin@ukr.net}
\author{\inits{R.}\fnm{Rostyslav}\snm{Maiboroda}}\email{mre@univ.kiev.ua}

\address{Kyiv National Taras Shevchenko University, Kyiv, Ukraine}

\markboth{A. Doronin, R. Maiboroda}{Testing hypotheses on moments by
observations
from a mixture with varying concentrations}

\begin{abstract}
A mixture with varying concentrations is a modification of a finite
mixture model in which the mixing probabilities (concentrations
of mixture components) may be different for different observations.
In the paper, we assume that the concentrations are
known and the distributions of components are completely unknown.
Nonparametric technique is proposed for testing hypotheses on
functional moments of components.
\end{abstract}

\begin{keyword}
Finite mixture model\sep functional moments\sep
hypothesis testing\sep mixture with varying concentrations
\MSC[2010]62G10 \sep62G20
\end{keyword}

\received{22 December 2014}
%
\revised{21 January 2015}
%
\accepted{22 January 2015}
\publishedonline{4 February 2015}
\end{frontmatter}

\section{Introduction}
Finite mixture models (FMMs) arise naturally in statistical
analysis of biological and sociological data
\cite{McLachlan,Titterington:FiniteMixture}.
The model of mixture with varying
concentrations (MVC) is a modification of the FMM where the
mixing probabilities may be different for different observations.
Namely, we consider a sample of subjects $O_1,\dots,O_N$ where
each subject belongs to one of subpopulations (mixture components)
$\PP_1, \dots,\allowbreak\PP_M$. The true subpopulation to which the
subject $O_j$ belongs is unknown, but we know the probabilities
$p_{j;N}^m=\pr[O_j\in\PP_m]$ (mixing probabilities,
concentrations
of $\PP_m$ in the mixture at the $j$th observation, $j=1,\dots,N$,
$m=1,\dots,M$). For each
subject $O$, a variable $\xi(O)$ is observed, which is considered
as a random element in a measurable space $\XXX$
equipped by a $\sigma$-algebra $\FFF$. Let
\[
F_m(A)=\pr\bigl[\xi(O)\in A\ |\ O\in\PP_m\bigr],\quad A\in
\FFF,
\]
be the distribution of $\xi(O)$ for subjects $O$ that belong to
the $m$th component. Then the unconditional distribution of
$\xi_{j;N}=\xi(O_j)$ is
\begin{equation}
\label{Eq1} \pr[\xi_{j;N}\in A]=\sum_{m=1}^M
p_{j;N}^m F_m(A),\quad A\in\FFF.
\end{equation}
The observations $\xi_{j;N}$ are assumed to be independent for
$j=1,\dots,N$.

We consider the nonparametric MVC model where the concentrations
$p_{j;N}^m$ are known but the component distributions $F_m$
are completely unknown. Such models were applied to analyze
gene expression level data
\cite{Maiboroda:StatisticsDNA} and data on
sensitive questions in sociology \cite{Shcherbina}. An example of
sociological data
analysis based on MVC is presented in \cite{Maiboroda:AdaptMVC}.
In this paper, we consider adherents of different political parties in
Ukraine as subpopulations $\PP_i$. Their concentrations
are deduced from 2006 parliament election results in different
regions of Ukraine. Individual voters are considered as
subjects; their observed characteristics are taken from the
Four-Wave Values Survey held in Ukraine in 2006. (Note that the
political choices of the surveyed individuals were unknown.
So, each subject must be considered as selected from mixture
of different $\PP_i$.) For example, one of the observed characteristics
is the satisfaction of personal income (in points from 1 to 10).

A natural question in the analysis of such data is
homogeneity testing for different components. For example, if $\XXX=\R
$, then we
may ask if the means or variances (or both) of the distributions
$F_i$ and $F_k$ are the same for some fixed~$i$ and $k$ or if
the variances of all the components are the same.

In \cite{Maiboroda:StatisticsDNA}, a test is proposed for
the hypothesis of two-means homogeneity. In this paper, we
generalize the approach from \cite{Maiboroda:StatisticsDNA} to a
much richer class of hypotheses, including different
statements on means, variances, and other generalized functional
moments of component distributions.

Hypotheses of equality of MVC component distributions, that is,
$F_i\equiv F_k$, were considered in \cite{Maiboroda2000} (a
Kolmogorov--Smirnov-type test is proposed) and~
\cite{Autin:TestDensities} (tests based on wavelet density
estimation). The
technique of our paper also allows testing such
hypotheses using the ``grouped $\chi^2$''-approach.

Parametric tests for different hypotheses on mixture components
were also considered in
\cite{Johnson,Liu,Titterington:FiniteMixture}.

The rest of the paper is organized as follows. We describe the considered
hypotheses formally and discuss the test construction in
Section \ref{SectHypothesis}. Section \ref{SectEsim}
contains auxiliary information on the functional moments
estimation in MVC models. In Section~\ref{SectTest}, the test is
described formally.
Section \ref{SectNum} contains results of the test
performance analysis by a simulation study and an example of real-life
data analysis.
Technical proofs are
given in Appendix~\ref{appendix}.

\section{Problem setting}\label{SectHypothesis}

In the rest of the paper, we use the following notation.

The zero vector from $\mathbb{R}^k$ is denoted by $\mathbb{O}_k$.
The unit $k\times k$-matrix is denoted by $\mathbb{I}_{k\times k}$,
and the $k\times m$-zero matrix by $\mathbb{O}_{k\times m}$.
Convergences in probability and in distribution are denoted
$\stackrel{P}{\longrightarrow}$ and
$\stackrel{d}{\longrightarrow}$, respectively.

We consider the set of concentrations $p=(p_{j;N}^m, j=1,\dots,N;\
m=1,\dots,M;\allowbreak N=1,\dots)$
as an infinite array, $p_{\bcdot;N}^{\bcdot}=(p_{j;N}^m, j=1,\dots,N;\
m=1,\dots,M)$
as an $(N\times m)$-matrix, and $p_{\bcdot;N}^m=(p_{j;N}^m,
j=1,\dots,N)\in\R^d$ and $p_{j,N}^{\bcdot}=(p_{j;N}^m,m=1,\dots,M)$
as column
vectors. The same notation is used for arrays of similar
structure, such as the array $a$ introduced further.

Angle brackets with subscript $N$ denote averaging
of an array over all the observations, for example,
\[
\bigl\langle a_{\bcdot;N}^m\bigr\rangle_N=
{1\over N}\sum_{j=1}^N
a_{j;N}^m.
\]
Multiplication, summation, and other similar operations inside
the angle brackets are applied to the arrays componentwise,
so
that
\[
\bigl\langle a_{\bcdot;N}^m p_{\bcdot;N}^k\bigr
\rangle_N= {1\over N}\sum_{j=1}^N
a_{j;N}^m p_{j;N}^k,\qquad \bigl\langle
\bigl(a_{\bcdot;N}^m\bigr)^2 \bigr
\rangle_N= {1\over N}\sum_{j=1}^N
\bigl(a_{j;N}^m \bigr)^2,
\]
and so on.

Angle brackets without subscript mean the limit of the
corresponding averages as $N\to\infty$ (assuming that this limit
exists):
\[
\bigl\langle p^m a^k\bigr\rangle=\lim
_{N\to\infty} \bigl\langle p_{\bcdot;N}^m
a_{\bcdot;N}^k\bigr\rangle_N.
\]
We introduce formally random elements $\eta_m\in\XXX$ with
distributions
$F_m$, $m=\allowbreak 1,\dots, M$.

Consider a set of $K \leq M$ measurable functions
$g_k:\XXX\to\R^{d_k}$,
$k=1\xch{,\dots,}{,\dots} K$. Let $\bar{g}_k^m$ be the (vector-valued) functional
moment of the $m$th component with moment function $g_k$, that is,
\begin{equation}
\label{Eq2} \bar{g}_k^m := \M\bigl[ g_k(
\eta_m) \bigr] \in\mathbb{R}^{d_k}.
\end{equation}

Fix a measurable function
$T:\R^{d_1}\times\R^{d_2}\times\cdots\times\R^{d_K}\to\R^L$.
For data described by the MVC model (\ref{Eq1}) we consider testing a
null-hypothesis
of the form
\begin{equation}
\label{Eq3} H_0: T\bigl(\bar{g}_1^1,\dots,
\bar{g}_K^K\bigr) = \mathbb{O}_L
\end{equation}
against the general alternative
$T(\bar{g}_1^1,\dots,\bar{g}_K^K)\not= \mathbb{O}_L$.

\begin{example} Consider a three-component mixture ($M=3$)
with $\XXX=\R$. We would like to test the hypothesis
$H_0^{\sigma}:\D\eta_1=\D\eta_2$ (i.e., the variances of the first and
second components are the same). This hypothesis can be
reformulated in the form (\ref{Eq3}) by letting
$g_1(x)=g_2(x)=(x,x^2)^T$ and
$T((y_{11},y_{12})^T,(y_{21},y_{22})^T)=
(y_{12}-(y_{11})^2,y_{22}-(y_{21})^2)^T$.
\end{example}

\begin{example} Let $\XXX=\R$. Consider the hypothesis of mean homogeneity
$H_0^\mu: \M\eta_1=\cdots=\M\eta_M$. Then the choice of
$g_m(x)=x$,
$T(y_1,\dots,y_M)=(y_1-y_2,y_2-y_3,\dots,y_{M-1}-y_M)^T$ reduces
$H_0^\mu$ to the form (\ref{Eq3}).
\end{example}

\begin{example} Let $\XXX$ be a finite discrete space:
$\XXX=\{x_1,\dots,x_r\}$. Consider the distribution homogeneity
hypothesis $H_0^{\equiv}: F_1\equiv
F_2$. To present it in the form (\ref{Eq3}), we can use
$g_i(x)=(\Ii\{x=x_i\},k=1,\dots,r-1)^T$ and
$T(y_1,y_2)=y_1-y_2$ ($y_i\in\R^{r-1}$\vadjust{\eject} for $i=1,2$). In the case
of continuous distributions, $H_0^{\equiv}$ can be discretized by
data grouping.
\end{example}

To test $H_0$ defined by (\ref{Eq3}), we adopt the following
approach. Let there be some consistent estimators $\hat g_{k;N}^m$ for
$\bar g_k^m$. Assume that $T$ is continuous. Consider the statistic
$\hat T_N=T(\hat g_{1;N}^1,\dots, \hat g_{K;N}^K)$.
Then, under $H_0$, $\hat T_N\approx\mathbb{O}_L $, and a
far departure of $\hat T_N$ from zero will evidence in favor of
the alternative.

To measure this departure, we use a Mahalanobis-type
distance. If $\sqrt{N}\hat T_N$ is asymptotically normal with a
nonsingular asymptotic covariance matrix~$D$, then, under $H_0$,
$N\hat T_N^T D^{-1}\hat T_N$ is asymptotically $\chi^2$-distributed. In
fact, $D$ depends on unknown component distributions $F_i$, so
we replace it by its consistent estimator $\hat D_N$. The
resulting statistic $\hat s_N=N\hat T_N^T \hat D_N^{-1}\hat T_N$ is
a test statistic. The test rejects $H_0$ if
$\hat s_N>Q^{\chi^2_L}(1-\alpha)$, where $\alpha$ is the significance level,
and $Q^G(\alpha)$ denotes the quantile of level $\alpha$ for
distribution $G$.

Possible candidates for the role of estimators $\hat g_{k;N}^m$ and
$\hat D_N$ are considered in the next section.

\section{Estimation of functional moments}\label{SectEsim}

Let us start with the nonparametric estimation of $F_m$ by the
weighted empirical distribution of the form
\[
\hat F_{m;N}(A)={1\over N}\sum
_{j=1}^N a_{j;N}^m\Ii\{\xi_{j;N}\in A\},
\]
where $a_{j;N}^m$ are some nonrandom weights to be selected
``in the best way.'' Denote $e_m=(\Ii\{k=m\}, k=1,\dots,M)^T$ and
\[
\varGamma_N={1\over N} \bigl(p_{\bcdot;N}^{\bcdot}
\bigr)^Tp_{\bcdot;N}^{\bcdot} =\bigl(\bigl\langle
p_{\bcdot;N}^m p_{\bcdot;N}^i\bigr
\rangle_N\bigr)_{m,i=1}^M.
\]
Assume that $\varGamma_N$ is nonsingular.
It is shown in \cite{Maiboroda:StatisticsDNA}
that, in this case, the weight array
\[
a_{\bcdot;N}^m=p_{\bcdot;N}^{\bcdot}
\varGamma_N^{-1}e_m
\]
yields the unbiased estimator with
minimal assured quadratic risk.

The \textit{simple} estimator $\hat g_{i;N}^m$ for $\bar g_i^m$ is
defined as
\[
\hat g_{i;N}^m=\int_{\XXX}g_i(x)
\hat F_{m;N}(dx) ={1\over N}\sum
_{j=1}^N a_{j;N}^m
g_i(\xi_{j;N}).
\]
We denote
$\varGamma=\lim_{N\to\infty}\varGamma_N=(\langle
p^ip^m\rangle)_{i,m=1}^M$. Let $h:\XXX\to\R^d$ be any measurable
function.
\begin{thm}\label{Th1} {{\normalfont(\cite{Maiboroda:AdaptMVC}, Lemma 1)}}
Assume that:
\begin{enumerate}
\item[\emph{(i)}] $\varGamma$ exists, and $\det\varGamma\not=0$;

\item[\emph{(ii)}] $\M[|h(\eta_m)|]< \infty, \ m=1\xch{,\dots,}{,\dots} M$.
\end{enumerate}

Then $\hat h_N^m\stackrel{P}{\longrightarrow}\M[h(\eta_m)]$ as
$N\to\infty$ for all $m=1,\dots,M$.\vadjust{\eject}
\end{thm}

To formulate the asymptotic normality result for the simple moment
estimators, we need some additional notation.

We consider the set of all moments $\bar g_k^k$, $k=1,\dots,K$,
as one long vector belonging to $\mathbb{R}^d$, $d := d_1+\cdots+d_K$:
\begin{subequations}
\begin{equation}
\bar{g} := \bigl( \bigl(\bar{g}_1^1\bigr)^T,
\dots,\bigl(\bar{g}_K^K\bigr)^T
\bigr)^T \in \mathbb{R}^d .
\end{equation}
The corresponding estimators also form a long vector
\begin{equation}
\hat{g}_N := \bigl( \bigl(\hat{g}_{1;N}^1
\bigr)^T,\dots,\bigl(\hat{g}_{K;N}^K
\bigr)^T \bigr)^T \in\mathbb{R}^d .
\end{equation}
\end{subequations}
We denote the matrices of mixed second moments of $g_k(x)$, $k=1,\dots,K$,
and the corresponding estimators
as
\begin{subequations}
\begin{align}
\bar{g}_{k,l}^m &:= \M\bigl[ g_k(
\eta_m) g_l(\eta_m)^T \bigr] \in
\mathbb{R}^{d_k \times d_l}, \quad k,l=1,\dots,K,\ m=1,\dots,M ;\\
\label{EqHatgkl} \hat{g}_{k,l;N}^m &:= \frac{1}{N} \sum
_{j=1}^N a_{j;N}^m
g_k(\xi_{j;N}) g_l(\xi_{j;N})^T
\in\mathbb{R}^{d_k \times d_l}, \quad k,l=1,\dots,K.
\end{align}
\end{subequations}

We consider the function $T$ as a function of $d$-dimensional
argument, that~is, $T(y) := T(y^1,\dots,y^K)$. Then
$\hat{T}_N := T(\hat{g}_N) =
T(\hat{g}_{1;N}^1,\dots,\hat{g}_{K;N}^K)$.

Let us define the following matrices (assuming that the limits
exist):
\begin{subequations}
\begin{align}
\alpha_{r,s;N} &:= \bigl(\alpha_{r,s;N}^{k,l}
\bigr)_{k,l=1,\dots,K} := \bigl( \bigl\langle a_{\cdot;N}^ka_{\cdot;N}^lp_{\cdot;N}^rp_{\cdot;N}^s
\bigr\rangle_N \bigr)_{k,l=1,\dots,K} \in\mathbb{R}^{K\times K} ;\\
\alpha_{r,s} &:= \bigl(\alpha_{r,s}^{k,l}
\bigr)_{k,l=1,\dots,K} := \Bigl( \lim_{N\to\infty} \alpha_{r,s;N}^{k,l}
\Bigr)_{k,l=1,\dots,K} \in\mathbb{R}^{K\times K}, \quad r,s=1,\dots,M ;
\end{align}\vspace*{-12pt}
\end{subequations}
\begin{subequations}
\begin{align}
\beta_{m;N}& := \bigl( \beta_{m;N}^{k,l}
\bigr)_{k,l=1,\dots,K} := \bigl( \bigl\langle a_{\cdot;N}^ka_{\cdot;N}^lp_{\cdot;N}^m
\bigr\rangle_N \bigr)_{k,l=\overline{1,K}} \in\mathbb{R}^{K\times K} ;\\
\beta_{m}& := \bigl( \beta_{m}^{k,l}
\bigr)_{k,l=1,\dots,K} := \Bigl( \lim_{N\to\infty} \beta_{m;N}^{k,l}
\Bigr)_{k,l=1,\dots,K} \in\mathbb{R}^{K\times K}, \quad m=1,\dots M .
\end{align}
\end{subequations}

Then the asymptotic covariance matrix of the normalized estimate
$\sqrt{N}(\hat g_N-\bar g)$ is $\varSigma$, where $\varSigma$ consists
of the blocks $\varSigma^{(k,l)}$:
\begin{subequations}
\begin{align}
\varSigma^{(k,l)}& := \sum_{m=1}^M
\beta_m^{k,l} \bar{g}_{k,l}^m - \sum
_{r,s=1}^M \alpha_{r,s}^{k,l}
\bar{g}_k^r \bigl(\bar{g}_l^s
\bigr)^T \in \mathbb{R}^{d_k \times d_l} ;\\
\varSigma&:= \bigl(\varSigma^{(k,l)} \bigr)_{k,l=1,\dots,K} \in
\mathbb{R}^{d \times d}.
\end{align}
\end{subequations}

\begin{thm}\label{Th2}

Assume that:
\begin{enumerate}
\item[\emph{(i)}] The functional moments $\bar{g}_k^m$, $\bar{g}_{k,l}^m$ exist
and are finite for $k,l=1,\dots,K$, $m=1,\dots,M$.

\item[\emph{(ii)}] There exists $\delta> 0$ such that $\M [ |g_k(\eta
_m)|^{2+\delta}  ] < \infty$,
$k=1,\dots,K$, $m=1,\dots,M$.

\item[\emph{(iii)}] There exist finite matrices $\varGamma$, $\varGamma^{-1}$, $\alpha
_{r,s}$, and $\beta_m$
for $r,s,m=1,\dots,M$.
\end{enumerate}

Then
$\sqrt{N} (\hat{g}_N - \bar{g}) \stackrel{d}{\longrightarrow} \zeta
\simeq\mathcal{N}(\mathbb{O}_d, \varSigma)$, $N\to\infty$.\vadjust{\eject}
\end{thm}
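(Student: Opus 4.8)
The plan is to write $\sqrt{N}(\hat g_N-\bar g)$ as a normalized sum of row‑independent summands of a triangular array and to establish the limit via the Cram\'er--Wold device together with a Lyapunov central limit theorem. Set $\zeta_{j;N}:=\bigl((a_{j;N}^1 g_1(\xi_{j;N}))^T,\dots,(a_{j;N}^K g_K(\xi_{j;N}))^T\bigr)^T-\M[\,\cdot\,]$, so that $\sqrt{N}(\hat g_N-\bar g)=N^{-1/2}\sum_{j=1}^N\zeta_{j;N}$, where the $\zeta_{j;N}$ are independent over $j$ for each fixed $N$. First I would record exact centering. Writing $a_{j;N}^m=\sum_i p_{j;N}^i(\varGamma_N^{-1})_{im}$, a direct computation gives $\langle a_{\cdot;N}^m p_{\cdot;N}^r\rangle_N=(\varGamma_N\varGamma_N^{-1})_{rm}=\delta_{rm}$; since $\M[g_k(\xi_{j;N})]=\sum_r p_{j;N}^r\bar g_k^r$ under the mixture, this yields $\M[\hat g_{k;N}^k]=\sum_r\langle a^k p^r\rangle_N\,\bar g_k^r=\bar g_k^k$, so the summands are genuinely centered and assumption (i) guarantees the moments in play are finite.

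Next I would compute the finite‑$N$ covariance and pass to the limit. By independence across $j$, the $(k,l)$ block of $\cov\bigl(\sqrt N(\hat g_N-\bar g)\bigr)$ equals $N^{-1}\sum_j a_{j;N}^k a_{j;N}^l\,\cov\bigl(g_k(\xi_{j;N}),g_l(\xi_{j;N})\bigr)$. Expanding the mixture covariance as $\sum_m p_{j;N}^m\bar g_{k,l}^m-\bigl(\sum_r p_{j;N}^r\bar g_k^r\bigr)\bigl(\sum_s p_{j;N}^s\bar g_l^s\bigr)^T$ and collecting terms reproduces exactly $\sum_m\beta_{m;N}^{k,l}\bar g_{k,l}^m-\sum_{r,s}\alpha_{r,s;N}^{k,l}\bar g_k^r(\bar g_l^s)^T$. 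Assumption (iii) forces $\beta_{m;N}\to\beta_m$ and $\alpha_{r,s;N}\to\alpha_{r,s}$, so each block converges to $\varSigma^{(k,l)}$, and the full finite‑sample covariance $\varSigma_N$ converges to $\varSigma$.

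The heart of the argument, and the step I expect to be the main obstacle, is verifying a Lindeberg‑type condition for the array. The enabling fact is that the weights are \emph{uniformly} bounded: since $0\le p_{j;N}^i\le 1$ and $\varGamma_N^{-1}\to\varGamma^{-1}$ (continuity of matrix inversion at the nonsingular limit $\varGamma$), there is a constant $C$ with $|a_{j;N}^m|\le C$ for all $j$ and all large $N$. Fix a unit vector $u\in\R^d$ and put $X_{j;N}:=u^T\zeta_{j;N}$. Using $|a_{j;N}^m|\le C$ together with $\M\bigl[|g_k(\xi_{j;N})|^{2+\delta}\bigr]=\sum_m p_{j;N}^m\M\bigl[|g_k(\eta_m)|^{2+\delta}\bigr]\le\max_m\M\bigl[|g_k(\eta_m)|^{2+\delta}\bigr]<\infty$, which is finite by (ii), the $c_r$‑inequality gives $\M\bigl[|X_{j;N}|^{2+\delta}\bigr]\le\tilde C$ uniformly in $j$ and $N$.

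Consequently, whenever $u^T\varSigma u>0$, the Lyapunov ratio satisfies $\bigl(\sum_j\M|X_{j;N}|^{2+\delta}\bigr)\big/\bigl(\sum_j\D X_{j;N}\bigr)^{1+\delta/2}\le N\tilde C/(N\,u^T\varSigma_N u)^{1+\delta/2}=O(N^{-\delta/2})\to0$, so the univariate Lyapunov CLT yields $N^{-1/2}\sum_j X_{j;N}\stackrel{d}{\longrightarrow}\mathcal N(0,u^T\varSigma u)$. In a degenerate direction $u^T\varSigma u=0$ the variance $u^T\varSigma_N u\to0$, hence the sum tends to $0$ in probability, which matches the degenerate law $\mathcal N(0,0)$. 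Since the limit is $\mathcal N(0,u^T\varSigma u)$ for every $u$, the Cram\'er--Wold device gives $\sqrt N(\hat g_N-\bar g)\stackrel{d}{\longrightarrow}\mathcal N(\mathbb{O}_d,\varSigma)$, completing the proof.
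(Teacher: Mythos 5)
Your proposal is correct and follows essentially the same route as the paper's proof: the same triangular-array decomposition $\sqrt{N}(\hat g_N-\bar g)=\sum_j\zeta_{j;N}$, the same block-wise computation showing the finite-sample covariance converges to $\varSigma$, and the same verification of the Lyapunov condition via the uniform bound on the weights $a_{j;N}^m$ (from nonsingularity of $\varGamma$) combined with the $(2+\delta)$-moment assumption. The only differences are presentational: you reduce to the univariate case by Cram\'er--Wold (handling the degenerate direction explicitly) where the paper invokes a multivariate Lyapunov CLT from Borovkov directly, and you verify the exact centering $\langle a_{\cdot;N}^k p_{\cdot;N}^r\rangle_N=\delta_{rk}$ explicitly, which the paper takes as known from the cited construction of the weights.
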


Thus, to construct a test for $H_0$, we need a consistent estimator for
$\varSigma$. The matrices $\alpha_{r,s;N}$ and $\beta_{m;N}$ are natural
estimators for $\alpha_{r,s}$ and $\beta_{m}$. It is also natural
to estimate $\bar{g}_{k,l}^m$ by $\hat{g}_{k,l;N}^m$ defined in
(\ref{EqHatgkl}). In view of Theorem~\ref{Th1},
these estimators are consistent under the assumptions of Theorem \ref{Th2}.
But they can possess undesirable properties for moderate sample
size. Indeed, note that $\hat F_{m;N}$ is not a
probability distribution itself since the weights $a_{j;N}^m$
are negative for some $j$. Therefore, for example, the simple
estimator of
the second moment of some component can be negative, estimator
(\ref{EqHatgkl})
for the positive semidefinite matrix $\bar{g}_{k,k}^m$ can
be not positive semidefinite matrix, and so on. Due to the asymptotic
normality result, this is not too troublesome for estimation of
$\bar g$. But it causes serious difficulties when one uses an
estimator of the asymptotic covariance matrix $D$ based on $\hat{g}_{k,l;N}^m$
in order to calculate $\hat s_N$.

In \cite{MK2005}, a technique is developed of $\hat F_{m;N}$ and
$\hat h_{N}^m$ improvement that allows one to derive estimators with
more adequate finite sample properties if
$\XXX=\R$.

So, assume that $\xi(O)\in\R$ and consider the weighted empirical
CDF
\[
\hat F_{m;N}(x)={1\over N}\sum
_{j=1}^N a_{j;N}^m\Ii\{
\xi_{j;N}<x\}.
\]
It is not a nondecreasing function, and it can attain values
outside $[0,1]$ since some $a_{j;N}^m$ are negative. The
transform
\[
\tilde F_{m;N}^{+}(x)=\sup_{y<x}\hat
F_{m;N}(y)
\]
yields a monotone function $\tilde F_{m;N}(x)$, but it still can
be greater than 1 at some~$x$. So, define
\[
\hat F_{m;N}^{+}(x)=\min\bigl\{1,\tilde F^{+}_{m;N}(x)
\bigr\}
\]
as the improved estimator for $F_m(x)$. Note that this is an
``improvement upward,'' since
$\tilde F_{m;N}^{+}(x)\ge\hat F_{m;N}(x)$. Similarly, a
downward improved estimator can be defined as
\begin{align*}
\tilde F_{m;N}^{-}(x)&=\inf_{y\ge x}\hat
F_{m;N}(y),\\
\hat F_{m;N}^{-}(x)&=\max\bigl\{0,\tilde F^{-}_{m;N}(x)
\bigr\}.
\end{align*}
Any CDF that lies between $\hat F_{m;N}^{-}(x)$ and $\hat
F_{m;N}^{+}(x)$ can be considered as an~improved version of
$\hat F_{m;N}(x)$. We will use only one such improvement, which
combines\allowbreak $\hat F_{m;N}^{-}(x)$ and $\hat F_{m;N}^{+}(x)$:
\begin{equation}
\label{Ku3:Eq10} \hat F_{m;N}^{\pm}(x)= %
\begin{cases}
\hat F_{m;N}^{+}(x) &\text{if $ \hat F_{m;N}^{+}(x,a)\le1/2 $,}\\
\hat F_{m;N}^{-}(x) &\text{if $ \hat F_{m;N}^{-}(x,a)\ge1/2 $,}\\
1/2 &\text{otherwise.}
\end{cases}
\end{equation}

Note that all the three considered estimators $\hat F^*_{m;N}$
($*$ means any symbol from $+$, $-$, or $\pm$) are piecewise
constants on intervals between successive order statistics of the
data. Thus, they can be represented as
\[
\hat F_{m;N}^*(x)={1\over N}\sum
_{j=1}^N b_{j;N}^{m*}\Ii\{
\xi_{j;N}<x\},
\]
where $b_{j;N}^{m*}$ are some random weights that depend on the data.

The corresponding \textit{improved} estimator for $\bar g_i^m$ is
\[
\hat g_{i;N}^{m*}=\int_{-\infty}^{+\infty}
g_i(x)\hat F_{m;N}^*(dx) ={1\over N}\sum
_{j=1}^N b_{j;N}^*
g_i(\xi_{j;N}).
\]
Let $h:\R\to\R$ be a measurable function.

\begin{thm}\label{Th3}
Assume that $\varGamma$ exists and $\det\varGamma\not=0$.
\begin{enumerate}
\item[\emph{(I)}] If for some $c_{-}<c_{+}$, $c_{-}\le\eta_m\le
c_{+}$ for all $m=1,\dots, M$ and $h$ has bounded variation on
$(c_{-},c_{+})$, then
$\hat h^{m*}_{N}\to\bar h^m$ a.s. as $N\to\infty$ for all $m=1,\dots
,M$ and
$*\in\{+,-,\pm\}$.\vspace*{-2pt}

\item[\emph{(II)}] Assume that:\vspace*{-3pt}
\begin{enumerate}
\item[\emph{(i)}] For some $\gamma>0$, $\M[|h(\eta_m)|^{2+\gamma}]<\infty$.\vspace*{-2pt}

\item[\emph{(ii)}] $h$ is a continuous function of bounded variation on some interval
$[c_{-},c_{+}]$ and monotone on $(-\infty,c_{-}]$ and
$[c_{+},+\infty)$.
\end{enumerate}
\end{enumerate}

Then $\hat h_N^{m\pm}\to\bar h^m$ in probability.
\end{thm}

\section{Construction of the test}\label{SectTest}

We first state an asymptotic normality result for
$\hat T_N$. Denote
\[
T'(y) := \biggl( \frac{\partial}{\partial y_1} T(y),\dots,\frac{\partial
}{\partial y_d}
T(y) \biggr) \in\mathbb{R}^{L \times d}.
\]
\begin{thm}\label{Th4}
Assume that:
\begin{enumerate}
\item[\emph{(i)}] $T'(\bar g)$ exist.\vspace*{-3pt}

\item[\emph{(ii)}] The assumptions of Theorem \ref{Th2} hold.\vspace*{-3pt}

\item[\emph{(iii)}] The matrix $D=T'(\bar g)\varSigma(T'(\bar g))^T$ is nonsingular.
\end{enumerate}

Then, under $H_0$, $\sqrt{N}\hat T_N\stackrel{d}{\longrightarrow}
N(\mathbb{O}_L,D)$.
\end{thm}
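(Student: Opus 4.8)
The plan is to obtain the result by the multivariate delta method, using the asymptotic normality of $\hat g_N$ from Theorem~\ref{Th2}. Under $H_0$ in the form (\ref{Eq3}) one has $T(\bar g)=\mathbb{O}_L$, so $\sqrt{N}\,\hat T_N=\sqrt{N}(T(\hat g_N)-T(\bar g))$. Since by assumption~(i) the Jacobian $T'(\bar g)$ exists, differentiability of $T$ at $\bar g$ gives the first-order expansion
\[
T(y)-T(\bar g)=T'(\bar g)\,(y-\bar g)+R(y),\qquad
\bigl|R(y)\bigr|=o\bigl(|y-\bar g|\bigr)\ \text{as }y\to\bar g,
\]
and I set $R_N:=R(\hat g_N)$.

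Next I would apply Theorem~\ref{Th2}, whose hypotheses are exactly assumption~(ii). It yields $\sqrt{N}(\hat g_N-\bar g)\stackrel{d}{\longrightarrow}\zeta\simeq\mathcal N(\mathbb{O}_d,\varSigma)$ and, as a by-product, the consistency $\hat g_N\stackrel{P}{\longrightarrow}\bar g$. Because $u\mapsto T'(\bar g)u$ is a fixed linear map, the continuous mapping theorem gives
\[
T'(\bar g)\sqrt{N}(\hat g_N-\bar g)\stackrel{d}{\longrightarrow}T'(\bar g)\,\zeta
\simeq\mathcal N\bigl(\mathbb{O}_L,\,T'(\bar g)\,\varSigma\,(T'(\bar g))^T\bigr)
=\mathcal N(\mathbb{O}_L,D).
\]

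It then remains to show $\sqrt{N}\,R_N\stackrel{P}{\longrightarrow}\mathbb{O}_L$; combined with the previous display through Slutsky's theorem, this yields the claim. To control $R_N$ I would use the auxiliary function $\psi$ defined by $\psi(y)=|R(y)|/|y-\bar g|$ for $y\neq\bar g$ and $\psi(\bar g)=0$, which is continuous at $\bar g$ precisely by the differentiability of $T$ there. Then
\[
\sqrt{N}\,|R_N|=\psi(\hat g_N)\cdot\sqrt{N}\,|\hat g_N-\bar g|,
\]
where $\psi(\hat g_N)\stackrel{P}{\longrightarrow}0$ by consistency and the continuous mapping theorem, while $\sqrt{N}\,|\hat g_N-\bar g|=O_P(1)$ by the tightness implied by Theorem~\ref{Th2}; hence the product is $o_P(1)$.

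The step requiring the most care is exactly this control of the remainder, since assumption~(i) postulates only the existence of $T'(\bar g)$ (plain differentiability, not continuous differentiability), so a mean-value bound over the random segment from $\hat g_N$ to $\bar g$ is not directly available; the $\psi$-device circumvents this and also disposes of the harmless case on $\{\hat g_N=\bar g\}$, where $R_N=\mathbb{O}_L$. Finally, assumption~(iii) (nonsingularity of $D$) is not needed for the convergence $\sqrt{N}\hat T_N\stackrel{d}{\longrightarrow}N(\mathbb{O}_L,D)$ itself, but it ensures that the limit is a nondegenerate Gaussian, which is what later makes the Mahalanobis-type statistic $N\hat T_N^T D^{-1}\hat T_N$ well defined and asymptotically $\chi^2_L$.
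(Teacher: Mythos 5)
Your proof is correct and follows essentially the same route as the paper: the paper disposes of the theorem in one line by invoking Theorem~\ref{Th2} together with the continuity (delta-method) theorem for weak convergence from Borovkov, which is exactly the statement you prove from scratch via the first-order expansion, the $\psi$-device for the remainder, and Slutsky's theorem. Your closing observations --- that differentiability of $T$ at $\bar g$ (not mere existence of the partial derivatives) is what is actually used, and that assumption (iii) is not needed for the convergence itself --- are both accurate.
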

For the proof, see Appendix. Note that (iii) implies the nonsingularity
of~$\varSigma$.

Now, to estimate $D$, we can use
\[
\hat D_N=T'(\tilde g_N)\tilde
\varSigma_N\bigl(T'(\tilde g_N)
\bigr)^T,
\]
where
$\tilde g_N$ is any consistent estimator for $\bar g$,
\begin{subequations}
\begin{align}
\tilde{\varSigma}^{(k,l)}_N &:= \sum
_{m=1}^M \beta_{m;N}^{k,l}
\tilde{g}_{k,l;N}^m - \sum_{r,s=1}^M
\alpha_{r,s;N}^{k,l} \tilde{g}_{k;N}^r
\bigl(\tilde{g}_{l;N}^s\bigr)^T \in
\mathbb{R}^{d_k \times d_l} ;\\
\tilde{\varSigma}_N &:= \bigl(\tilde{\varSigma}_N^{(k,l)}
\bigr)_{k,l=1,\dots,K} \in \mathbb{R}^{d \times d},
\end{align}
\end{subequations}
where $\tilde{g}_{k,l;N}^m$ is any consistent estimator for
$\bar{g}_{k,l;N}^m$. For example, we can use\vadjust{\eject}
\[
\tilde{g}_{k,l;N}^m=\hat g_{k,l;N}^{m\pm}=
{1\over N}\sum_{j=1}^N
b_{j;N}^{m\pm} g_k(\xi_{j;N})g_l(
\xi_{j;N})^T
\]
if $\XXX=\R$ and the assumptions of Theorem \ref{Th3} hold for all
$h(x)=g_l^i(x)g_k^n(x)$, $i,k=1,\dots,K$, $i=1,\dots,d_l$,
$n=1,\dots,d_k$, $g_l(x)=(g_l^1(x),\dots,g_l^{d_l}(x))^T$.

Now let the test statistic be
$\hat s_N=N (\hat T_N)^T \hat D_N^{-1}\hat T_N$. For a given
significance level~$\alpha$, the test $\pi_{N,\alpha}$ accepts $H_0$ if
$\hat s_N\le Q^{\xi_L}(1-\alpha)$ and rejects $H_0$ otherwise.

The $p$-level of the test (i.e., the attained significance level) can be
calculated as $p=1-G(\hat s_N)$, where $G$ means the CDF of
$\chi^2_L$-distribution.

\begin{thm}\label{Th5}
Let the assumptions of Theorem \ref{Th4} hold. Moreover,
assume the following:
\begin{enumerate}
\item[\emph{(i)}] $\tilde g_N$ and $\tilde{g}_{k,l;N}^m$ $(k,l=1,\dots,K$,
$m=1,\dots,M)$
are consistent
estimators for $\bar g$ and $\bar{g}_{k,l;N}^m$, respectively.

\item[\emph{(ii)}] $T'$ is continuous in some neighborhood of $\bar g$.
\end{enumerate}

Then $\lim_{N\to\infty}\pr_{H_0}\{\pi_{N,\alpha} \text{\textnormal{
rejects }}
H_0\}=\alpha$.
\end{thm}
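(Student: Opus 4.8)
The plan is to prove that, under $H_0$, the test statistic converges in distribution to a $\chi^2_L$ law, $\hat s_N \stackrel{d}{\longrightarrow}\chi^2_L$; the assertion then follows at once. Indeed, the test $\pi_{N,\alpha}$ rejects exactly when $\hat s_N>Q^{\chi^2_L}(1-\alpha)$, and, writing $G$ for the $\chi^2_L$ distribution function (which is continuous), the quantile $Q^{\chi^2_L}(1-\alpha)$ is a continuity point of $G$, so
\[
\pr_{H_0}\bigl\{\hat s_N>Q^{\chi^2_L}(1-\alpha)\bigr\}
\longrightarrow 1-G\bigl(Q^{\chi^2_L}(1-\alpha)\bigr)=\alpha .
\]

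The first, and main, step is to show that $\hat D_N\stackrel{P}{\longrightarrow}D$. I would unwind the definition $\hat D_N=T'(\tilde g_N)\tilde\varSigma_N(T'(\tilde g_N))^T$ and treat each factor separately. The nonrandom quantities satisfy $\alpha_{r,s;N}\to\alpha_{r,s}$ and $\beta_{m;N}\to\beta_m$ by assumption (iii) of Theorem~\ref{Th2}, while the random building blocks $\tilde g_{k;N}^r$ and $\tilde g_{k,l;N}^m$ are consistent for $\bar g_k^r$ and $\bar g_{k,l}^m$ (assumption (i), the off-diagonal estimators $\tilde g_{k;N}^r$ with $r\neq k$ being covered by Theorems~\ref{Th1} and~\ref{Th3}). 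Since each block $\tilde\varSigma_N^{(k,l)}$ is a fixed bilinear---hence continuous---function of these quantities, the continuous mapping theorem yields $\tilde\varSigma_N\stackrel{P}{\longrightarrow}\varSigma$. Likewise, consistency of $\tilde g_N$ together with the continuity of $T'$ near $\bar g$ (assumption (ii)) gives $T'(\tilde g_N)\stackrel{P}{\longrightarrow}T'(\bar g)$. One more application of the continuous mapping theorem to the product assembles these into $\hat D_N\stackrel{P}{\longrightarrow}T'(\bar g)\varSigma(T'(\bar g))^T=D$.

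Next I would combine this with Theorem~\ref{Th4}. Because $D$ is nonsingular by assumption (iii) of Theorem~\ref{Th4} and matrix inversion is continuous at nonsingular matrices, $\hat D_N$ is invertible on an event of probability tending to $1$ and $\hat D_N^{-1}\stackrel{P}{\longrightarrow}D^{-1}$. By Theorem~\ref{Th4}, under $H_0$ we have $\sqrt N\hat T_N\stackrel{d}{\longrightarrow}\zeta\sim\mathcal N(\mathbb{O}_L,D)$. Coupling this convergence in distribution with the convergence in probability of $\hat D_N^{-1}$ to the constant $D^{-1}$ gives the joint limit $(\sqrt N\hat T_N,\hat D_N^{-1})\stackrel{d}{\longrightarrow}(\zeta,D^{-1})$, and applying the continuous mapping theorem to the map $(v,A)\mapsto v^T A v$ produces
\[
\hat s_N=\bigl(\sqrt N\hat T_N\bigr)^T\hat D_N^{-1}\bigl(\sqrt N\hat T_N\bigr)
\stackrel{d}{\longrightarrow}\zeta^T D^{-1}\zeta .
\]
Finally, since $D^{-1/2}\zeta\sim\mathcal N(\mathbb{O}_L,\mathbb{I}_{L\times L})$, the quadratic form $\zeta^T D^{-1}\zeta=(D^{-1/2}\zeta)^T(D^{-1/2}\zeta)$ has the $\chi^2_L$ distribution, so $\hat s_N\stackrel{d}{\longrightarrow}\chi^2_L$, as required.

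The chief obstacle is the consistency statement $\hat D_N\stackrel{P}{\longrightarrow}D$: one must check that every ingredient of $\tilde\varSigma_N$ and of $T'(\tilde g_N)$ converges and that the defining algebraic operations preserve convergence in probability. A small but genuine technical point is that $\hat D_N^{-1}$ is only defined on $\{\det\hat D_N\neq0\}$, so one must first argue that this event has probability tending to $1$ (which follows from $\hat D_N\stackrel{P}{\longrightarrow}D$ and $\det D\neq0$) before Slutsky's argument and the continuous mapping theorem can legitimately be invoked.
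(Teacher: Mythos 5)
Your proposal is correct and follows essentially the same route as the paper: establish $\tilde\varSigma_N\stackrel{P}{\longrightarrow}\varSigma$ and $T'(\tilde g_N)\stackrel{P}{\longrightarrow}T'(\bar g)$, hence $\hat D_N^{-1}\stackrel{P}{\longrightarrow}D^{-1}$, and combine with Theorem~\ref{Th4} via a Slutsky-type argument to get $\hat s_N\stackrel{d}{\longrightarrow}\chi^2_L$. The only cosmetic difference is that the paper compares $\hat s_N$ with $\tilde s_N=N\hat T_N^TD^{-1}\hat T_N$ using stochastic boundedness of $\sqrt N\hat T_N$, whereas you invoke joint convergence and the continuous mapping theorem directly; these are interchangeable.
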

\setcounter{example}{1}
\begin{example}[Continued] Consider testing $H_0^\mu$ by the
test $\pi_{N,\alpha}$ with $g_i(x)=x$ and
$T(y_1,\dots,y_M)=(y_1-y_2,y_2-y_3,\dots,y_{M-1}-y_M)^T$.
It is obvious that $T'(y)$ is a constant matrix of full rank.
Assume that $\D[\eta_m]>0$ for all $m=1,\dots,M$ and
$\det\varGamma\not=0$. Then $\varSigma$ is nonsingular, and so is $D$.
Thus, in this case, assumptions (i) and (iv) of Theorem \ref{Th2},
(i) and (iii) of Theorem \ref{Th4}, and (ii) of Theorem \ref{Th5}
hold.

To ensure assumption (ii) of Theorem \ref{Th2}, we need
$\M[|\eta_m|^{2+\delta}]<\infty$ for some $\delta>0$ and all
$m=1,\dots,M$. In view of Theorem \ref{Th1}, this assumption also
implies the consistency of $\hat g_N$ and $\hat g_{kl;N}^m$. If one
uses $\hat g_N^{\pm}$ and $\hat g_{kl;N}^{m\pm}$ as estimators
$\tilde g_N$ and $\tilde g_{kl;N}^m$ in $\hat D_N$, then a more
restrictive assumption $\M[|\eta_m|^{4+\delta}]<\infty$ is needed
to ensure their consistency by Theorem \ref{Th3}.
\end{example}

\section{Numerical results}\label{SectNum}

\subsection{Simulation study}\label{SectSimul}
To access the proposed test performance on samples of moderate size,
we conducted a small simulation study. Three-component mixtures
were analyzed ($M=3$) with Gaussian components $F_m\sim
N(\mu_m,\sigma_m^2)$. The concentrations were generated as\allowbreak
$p_{j,N}^m=\zeta_{j;N}^m/s_{j;N}$, where $\zeta_{j;N}^m$ are
independent, uniformly distributed on $[0,1]$ random variables, and
$s_{j;N}=\sum_{m=1}^M\zeta_{j;N}^m$. In all the experiments, 1000
samples were generated for each sample size
$N=$ 50, 100, 250, 500, 750, 1000, 2000, and 5000. Three
modifications of $\pi_{N;\alpha}$ test were applied to each
sample. In the first modification, (ss), simple estimators were used to
calculate both $\hat T_N$ and $\hat D_N$. In the second
modification, (si), simple estimators were used in $\hat T_N$, and the
improved ones were used in $\hat D_N$. In the last modification
(ii), improved estimators were used in $\hat T_N$ and $\hat D_N$.
Note that the modification (ii) has no theoretical justification
since, as far as we know, there are no results on \xch{the limit
distribution}{the imit
distribution} of $\sqrt{N}(\hat g_N^{\pm}-\bar g)$.

All tests were used with the nominal significance level
$\alpha=0.05$.\vadjust{\eject}

In the figures, frequencies of errors of the
tests are presented. In the
plots, $\Box$ corresponds to (ss), $\triangle$ to (si), and
$\circ$ to (ii) modification.

\begin{figure}[t]
\includegraphics{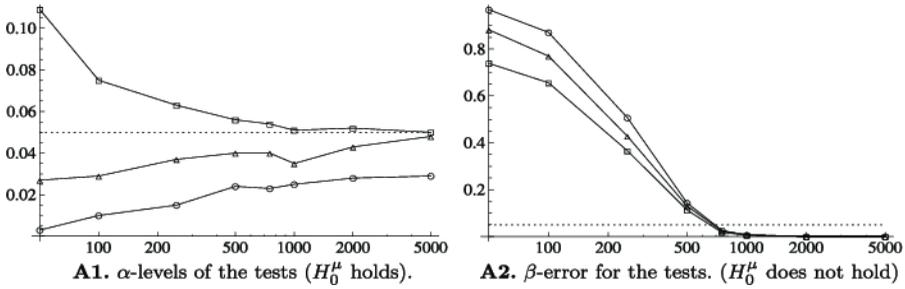}
\caption{Testing homogeneity of means ($H_0^\mu$)}\label{Fig1}
\end{figure}

\def\theexperiment{A\arabic{experiment}}
\begin{experiment}\label{experimenta1} In this experiment, we consider testing
the mean homogeneity hypothesis $H_0^{\mu}$. The means were
taken
$\mu_m=0$, $m=1,2,3$, so $H_0^{\mu}$ holds. To shadow the equality of
means,
different variances
of components were taken, namely $\sigma_1^2=1$, $\sigma_2^2=4$, and
$\sigma_3^2=9$. The resulting
first-type error frequencies are presented on the left panel
of Fig.~\ref{Fig1}. For the (ss) test, for small~$N$,
there were 1.4\% cases of incorrect covariance matrix estimates
($\hat D_N$~was not positive definite). Incorrect
estimates were absent for large $N$.
\end{experiment}

\begin{experiment} Here we also tested $H_0^{\mu}$ for
components with the same variances as in \ref{experimenta1}. But
$\mu_1=2$ and $\mu_2=\mu_3=0$, so $H_0^\mu$ does not hold. The
frequencies of the second-type error are presented on the right panel
of Fig.~\ref{Fig1}. The percent of incorrect estimates $\hat D_N$ is
1.6\%
for (ss) and small $N$.
\end{experiment}

\setcounter{experiment}{0}
\def\theexperiment{B\arabic{experiment}}
\begin{experiment}\label{experimentb1} In this and the next experiment, we tested
$H_0^\sigma$: $\sigma_1^2=\sigma_2^2$. The data were generated with
$\mu_1=0$, $\mu_2=3$, $\mu_3=-2$, $\sigma_1^2=\sigma_2^2=1$,
and $\sigma_2^2=4$, so $H_0^{\sigma}$ holds. The frequencies of the
first
-type error are
presented on the left panel of Fig. \ref{Fig2}. The percent of
incorrect $\hat D_N$ in (ss) varies from 19.4\% for small $N$ to
0\% for large~$N$.
\end{experiment}

\begin{experiment} Now $\mu_m$ and $\sigma_3^2$ are the same as in
\ref{experimentb1}, but $\sigma_1^2=1$ and $\sigma_2^2=4$, so $H_0^\sigma$
does not hold. The frequencies of the second-type error are
presented on the left panel of Fig. \ref{Fig2}. The percent of
incorrect $\hat D_N$ in (ss) was 15.5\% for small $N$ and decreases to
0\% for large $N$.
\end{experiment}

\begin{figure}[t!]
\includegraphics{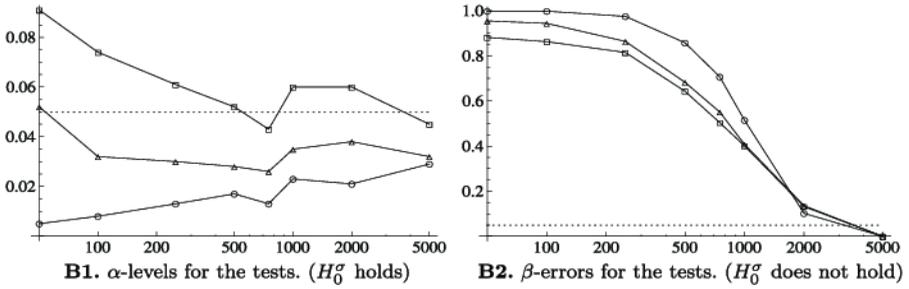}
\caption{Testing equality of variances ($H_0^\sigma$)}\label{Fig2}
\end{figure}
The presented results show reasonable agreement of the observed
significance levels of the tests to their nominal level 0.05 when
the sample sizes were larger then 500. The power of the tests
increases to 1 as the sample sizes grow. It is interesting to
note that the (ii) modification, although theoretically not
justified, demonstrates the least first-type error and rather
good power. From these results the (si) modification of the test
seems the most prudent one.

\subsection{Example of a sociological data analysis}

Consider the data discussed in \cite{Maiboroda:AdaptMVC}. It
consists of two parts. The first part is the data from the
Four-Wave
World Values Survey (FWWVS) held in Ukraine by the European Values
Study Foundation (\url{www.europeanvalues.nl}) and World Values Survey
Association (\url{www.worldvaluessurvey.org}) in 2006. They contain
answers of $N=4006$ Ukrainian respondents on different questions
about their social status and attitudes to different human values.
We consider here the level of satisfaction of personal income
(subjective income) as our variable of interest $\xi$, so
$\xi_{j;N}$ is the subjective income of the $j$th respondent.

Our aim is to analyze differences in the distribution of $\xi$ on
populations of adherents of different political parties. Namely,
we use the data on results of Ukrainian Parliament elections held
in 2006. 46 parties took part in the elections. The voters could
also vote against all or not to take part in the voting. We
divided all the population of Ukrainian voters into three large
groups (political subpopulations): $\PP_1$ which contains
adherents of
the Party of Regions (\textit{PR}, 32.14\% of votes), $\PP_2$
of Orange Coalition supporters (\textit{OC} which consisted of
``BJUT'' and
``NU'' parties, 36.24\%), and $\PP_3$ of all others, including the
persons who voted against all or did not take part in the pool
(\textit{Other}).

Political preferences of respondents are not available in the FWWVS
data, so we used official results of the elections by 27 regions
of Ukraine (see the site of Ukrainian Central Elections
Commission \url{www.cvk.gov.ua}) to estimate the concentrations
$p_{j;N}^m$ of the considered political subpopulations in the
region where the $j$th respondent voted.

Means and variances of $\xi$ over different subpopulations were
estimated by the data (see Table \ref{Tab0}).
\begin{table}
\caption{Means ($\mu$) and variances ($\sigma^2$) for the
subjective income distribution on different political populations
}\label{Tab0}
\begin{tabular}{llll}\hline
& \textit{PR} & \textit{OC} & \textit{Other} \\ \hline
$\mu$ & 2.31733 & 2.65091 & 4.44504\\
$\mu^{+}$ & 2.45799 & 2.64187& 4.44504\\
$\sigma^2$ & 0.772514 & 4.85172 & 4.93788\\
$\sigma^{2+}$ & 2.09235 & 4.7639 & 4.93788\\ \hline
\end{tabular}
\end{table}
Different tests were performed to test
their differences. The results are presented in the Table
\ref{Tab1}. Here $\mu_m$ means
the expectation, and $\sigma_m^2$ means the variance of $\xi$ over
the $m$th subpopulation. Degrees of freedom for the limit
$\chi^2$ distribution are placed in the ``df'' column.

These results show that the hypothesis of
homogeneity of all variances must be definitely rejected. The
variances of $\xi$ for
\textit{PR} and \textit{OC} adherents are different,
but the tests failed
to observe significant differences in the pairs of variances
\textit{PR-Other} and \textit{OC-Other}. For the means, all the
tests agree that \textit{PR} and \textit{OC} has the same mean
$\xi$, whereas the mean of \textit{Other} is different from the common
mean of \textit{PR} and \textit{OC}.

\begin{table}
\caption{Test statistics and $p$-values for hypotheses on subjective
income distribution}\label{Tab1}
\begin{tabular}{lllll}\hline
Hypotheses & ss & si & ii & df \\\hline
$\mu_1=\mu_2=\mu_3$ & 11.776 & 10.8658 & 8.83978 & 2\\\smallskip
$p$-value & 0.00277252 & 0.0043704 & 0.0120356 & \\
$\mu_1=\mu_2$ & 2.15176 & 2.04539 & 0.621483 & 1 \\\smallskip
$p$-value & 0.142407 & 0.152668 & 0.430497 & \\
$\mu_1=\mu_3$ & 10.7076 & 10.0351 & 8.75216 & 1 \\\smallskip
$p$-value & 0.00106696 & 0.00153585 & 0.00309236 & \\
$\mu_2=\mu_3$ & 7.40835 & 7.10653 & 7.17837 & 1\\\smallskip
$p$-value & 0.00649218 & 0.00768036 & 0.00737877 & \\
$\sigma_1^2=\sigma_2^2=\sigma_3^2$ & 15.8317 & 14.786 & 6.40963  & 2\\\smallskip
$p$-value & 0.000364914 & 0.000615547 & 0.0405664 & \\
$\sigma_1^2=\sigma_2^2$ & 14.7209 & 13.8844 & 5.95528 & 1\\\smallskip
p-level & 0.000124657 & 0.000194405 & 0.0146733 & \\
$\sigma_1^2=\sigma_3^2$ & 1.92166 & 1.77162 & 0.826778 & 1 \\\smallskip
$p$-level & 0.165674 & 0.183182 & 0.363206 & \\
$\sigma_2^2=\sigma_3^2$ & 0.000741088 & 0.00072198 & 0.00294353 & 1\\
$p$-level &0.978282 & 0.978564 & 0.956733 & \\\hline
\end{tabular}
\end{table}

\section{Concluding remarks}

We developed a technique that allows one to construct testing
procedures for different hypotheses on functional moments of
mixtures with varying concentrations. This technique can be
applied to test the homogeneity of means or variances (or both) of some
components of the mixture. Performance of different modifications of
the test
procedure is compared in a small simulation study. The (ss) modification
showed the worst first-type error and the highest power. The (ii)
test has the best first-type error and the worst power.
It seems that
the (si) modification can be recommended as a golden mean.

\section*{Acknowledgement} The authors are thankful
to the anonymous referee for fruitful comments.

\appendix
\section{Appendix}\label{appendix}

\begin{proof}[Proof of Theorem \ref{Th2}]
Note that
$S_N=\sqrt{N}(\hat g_N-\bar g)=\sum_{j=1}^N \zeta_{j;N}$,
where
\[
\zeta_{j;N}={1\over\sqrt{N}} \bigl(a_{j;N}^i
\bigl(g_1(\xi_{j;N})-M\bigl[g_i(\xi
_{j;N})\bigr]\bigr)^T, i=1,\dots,K \bigr)^T.
\]
We will apply the CLT with the Lyapunov condition (see Theorem 11 from
Chapter 8 and Remark 4 in Section 4.8 in \cite{BorovkovPr}) to $S_N$.
It is readily seen that $\zeta_{j:N}$\xch{, $j=1,\dots,N$}{$j=1,\dots,N$}, are
independent for fixed $N$ and $\M[\zeta_{j;N}]=0$.\vadjust{\eject}

Let $\varSigma_{j;N}=\cov(\zeta_{j;N})$. Then $\varSigma_{j;N}$
consists of the blocks
\begin{align*}
\varSigma_{j;N}^{(k,l)}&= {1\over N}
a_{j;N}^k a_{j;N}^l\bigl(\M
\bigl[g_k(\xi_{j;N}) \bigl(g_l(
\xi_{j;N})\bigr)^T\bigr] - \M\bigl[g_k(
\xi_{j;N})\bigr]\M\bigl[g_l(\xi_{j;N})
\bigr]^T\bigr)\\
&= {1\over N} a_{j;N}^k a_{j;N}^l
\Biggl( \sum_{m=1}^M p_{j;N}\bar
g_{k,l}^m - \Biggl(\sum_{m=1}^M
p_{j;N}\bar g_{k}^m \Biggr) \Biggl(\sum
_{m=1}^M p_{j;N}\bar g_{l}^m
\Biggr)^T \Biggr).
\end{align*}
It is readily seen that $\sum_{j=1}^N
\varSigma_{j;N}^{(k,l)}\to\varSigma^{(k,l)}$
as $N\to\infty$. So
\begin{equation}
\label{Eq1ap} \cov S_N\to\varSigma\quad\text{ as } N\to\infty.
\end{equation}
To apply the CLT, we only need to verify the Lyapunov condition
\begin{equation}
\label{Eq2ap} \sum_{j=1}^N \M\bigl[\big|
\zeta_{j;N}\big|^{2+\delta}\bigr]\to0\quad \text{ for some } \delta>0.
\end{equation}
Note that assumption (iii) implies
\[
\sup_{1\le j\le N, 1<\le m\le M, N>N_0} \big|a_{j;N}^m\big|<C_1
\]
for some $N_0$ and $C_1<\infty$; thus,
\begin{equation}
\label{Eq3ap} \sum_{j=1}^N \M\bigl[\big|
\xi_{j;N}\big|^{2+\delta}\bigr]\le \sum_{j=1}^N
{C_1^{2+\delta}\over N^{1+\delta/2}} \M\bigl[\big|g(\xi_{j;N})\big|^{2+\delta}\bigr],
\end{equation}
where
$
g(x)=(g_1(x)^T,\dots,g_k(x)^T)
$.
Since
$
|g(\xi_{j;N})|^2=\sum_{k=1}^K |g_k(\xi_{j;N})|^2
$
and, by the H\"{o}lder inequality,
\[
\big|g(\xi_{j;N})\big|^{2+\delta}\le K^{\delta/2}\sum
_{k=1}^K\big|g_k(\xi_{j;N})\big|^{2+\delta},
\]
we obtain
\begin{align*}
\M\bigl[\big|g(\xi_{j;N})\big|^{2+\delta}\bigr]&\le K^{\delta/2}\sum
_{k=1}^K \M\bigl[\big|g(\xi_{j;N})\big|^{2+\delta}
\bigr]\\
&= K^{\delta/2}\sum_{k=1}^K\sum
_{m=1}^M p_{j;N}^m
\M\big|g(\eta_m)\big|^{2+\delta}<C_2<\infty,
\end{align*}
where the constant $C_2$ does not depend on $j$ and $N$.
This, together with (\ref{Eq3ap}), yields (\ref{Eq2ap}). By the
CLT we obtain
$S_N\stackrel{d}{\longrightarrow}N(\mathbb{O},\varSigma)$.
\end{proof}

\begin{proof}[Proof of Theorem \ref{Th3}. Part (I)]
Since $\hat F_{m;N}^{+}(x)$ is piecewise constant and $F_m(x)$ is
nondecreasing, the $\sup_x$ of $|\hat F_{m;N}^{+}(x)-F_m(x)|$ can be
achieved only at jump points of $\hat F_{m;N}^{+}(x)$. But
$\hat F_{m;N}^{+}(x)\ge\hat F_{m;N}(x)$ for all $x$, and if $x$
is a jump point, then
\[
\hat F_{m;N}(x-)\le\hat F_{m;N}^{+}(x-)\le\hat
F_{m;N}^{+}(x)\le\hat F_{m;N}(x).
\]
Therefore,
\[
\sup_{x\in\R}\big|\hat F_{m;N}^{+}(x)-F_m(x)\big|
\le \sup_{x\in\R}\big|\hat F_{m;N}(x)-F_m(x)\big|.
\]
Similarly,
\[
\sup_{x\in\R}\big|\hat F_{m;N}^{*}(x)-F_m(x)\big|
\le \sup_{x\in\R}\big|\hat F_{m;N}(x)-F_m(x)\big|.
\]
By the Glivenko--Cantelli-type theorem for weighted empirical
distributions (which can be derived, e.g., as a
corollary of Theorem 2.4.2 in \cite{Maiboroda2003})
\[
\sup_{x\in\R}\big|\hat F_{m;N}(x)-F_m(x)\big|\to0
\quad\text{ a.s. as } N\to\infty
\]
if $\sup_{j=1\dots,N;N>N_0}|a_{j;N}^m|<\infty$. The latter condition is
fulfilled
since $\det\varGamma\not=0$. Thus,
\begin{equation}
\label{Eq4ap} \sup_{x\in\R}\big|\hat F_{m;N}^*(x)-F_m(x)\big|
\to0\quad \text{ a.s. as } N\to\infty.
\end{equation}
For any $h:\R\to\R$ and any interval $A\subseteq\R$, let $V_A(h)$
be the variation of $h$ on~$A$. Take $A=(c_{-},c_{+})$. Then, under
the assumptions of the theorem,
\begin{align*}
\big|\hat h_N^{m*}-\bar h_m\big|&= \bigg|\int
_A h(x)d\bigl(\hat F_{m;N}^*(x)-F_m(x)
\bigr)\bigg|\\
&\le\sup_{x\in A}\big|\hat F_{m;N}(x)-F_m(x)\big|
\bcdot V_A(h)\to0\quad \text{ a.s. as } N\to\infty.
\end{align*}

\textbf{Part (II).} Note that if the assumptions of this part
hold for some $A=(c_{-},c_{+})$, then they will also hold for any new
$c_{-}$, $c_{+}$ such that $A\subset(c_{-},c_{+})$. Thus, we may
assume that $F_m(c_{-})<1/4$ and $F_m(c_{+})>3/4$.

Consider the random event
$B_N^1=\{\hat F_{m;N}^{\pm}(x)=\tilde F_{m;N}^{+}(x) \text{ for all }
x\le c_{-}\}$. Then (\ref{Eq4ap}) implies
$\pr\{B_N^1\}\to 1$ as $N\to\infty$.

We bound
\begin{equation}
\label{EqHap} \big|\hat h_N^{m\pm}-\bar h^m\big|\le
J_1+J_2+J_3,
\end{equation}
where
\begin{align*}
J_1&=\Biggl\llvert \int_{-\infty}^{c_{-}} h(x)d
\bigl(\hat F_{m;N}^{\pm}(x)-F_m(x)\bigr) \Biggr
\rrvert ,\\
J_2&=\Biggl\llvert \int_{c_{-}}^{c_{+}} h(x)d
\bigl(\hat F_{m;N}^{\pm}(x)-F_m(x)\bigr) \Biggr
\rrvert ,\\
J_3&=\Biggl\llvert \int_{c_{+}}^{+\infty} h(x)d
\bigl(\hat F_{m;N}^{\pm}(x)-F_m(x)\bigr) \Biggr
\rrvert .
\end{align*}
Then $J_2\stackrel{P}{\longrightarrow}0$ as in Part (I).

Let us assume that the event $B_N^1$ occurred and bound
\begin{equation}
\label{Eq6ap} J_1\le\Biggl\llvert \int_{-\infty}^{c_{-}}
h(x)d\bigl(\tilde F_{m;N}^{+}(x)-F_m(x)\bigr)
\Biggr\rrvert .
\end{equation}
If $h(x)$ is bounded as $x\to-\infty$, then we can take
$c_{-}=-\infty$ and obtain $J_1=0$. Consider the case of
unbounded $h$. Since $h$ is monotone, we have $h(x)\to+\infty$ or
$h(x)\to+\infty$ as $x\to-\infty$. We will consider the first
case; the reasoning for the second one is analogous. Thus, $h(x)\to
+\infty$
as $x\to-\infty$, and we can take $h(x)>0$ for $x<c_{-}$.

By the inequality (16) in \cite{MK2005},
\begin{equation}
\label{Eq7ap} \pr\Bigl[\sup_{t<x}\big|\tilde F_{m;N}^{+}(t)-F_m(t)\big|>
\varepsilon\Bigr] \le C_1\bigl(\bar F^2(x)
\varepsilon^{-4}N^{-2}+\bar F(x)\varepsilon^{-2}N^{-1}
\bigr),
\end{equation}
where $\bar F(x)=\sum_{m=1}^M F_m(x)$, $C_1<\infty$.

Let us take $x_0,\dots,x_n,\dots$ such that $h(x_j)=2^jh(c_{-})$.
By assumption (ii) and the Chebyshev inequality,
\[
\bar F(x)=\sum_{m=1}^M\pr[
\eta_m<x] \le \sum_{m=1}^M
h^{-2-\gamma}(x)\M\bigl[\big|h(\eta_m)\big|^{2+\gamma}\bigr],
\]
and
\[
\bar F(x_j)\le D 2^{-(2+\gamma)j}
\]
for some $D<\infty$.

Let $\varepsilon_j=2^{-(1+\gamma/4)j}N^{-1/4}$. Then
by (\ref{Eq7ap})
\[
\pr\Bigl[\sup_{t<x_j}\big|\tilde F_{m;N}^{+}(t)-F_m(t)\big|>
\varepsilon_j\Bigr] \le C_2 \bigl( 2^{-\gamma j}N^{-1}+2^{-\gamma j/2}N^{-1/2}
\bigr)
\]
for some $C_2<\infty$.
Denote $B_N^2=\cap_j\{\sup_{t<x_j}|\tilde
F_{m;N}^{+}(t)-F_m(t)|\le\varepsilon_j\}$.
Then
\[
\pr\bigl[B_N^2\bigr]\ge1-\sum
_{j=1}C_2 \bigl( 2^{-\gamma j}N^{-1}+2^{-\gamma j/2}N^{-1/2}
\bigr)\ge1-C_3N^{-1}-C_4N^{-1/2}\to1
\]
as $N\to\infty$.
Now,
$J_{1}=|\int_{-\infty}^{c_{-}}h(x)d(\tilde F_{m;N}^{+}(x)-F_m(x))|$.
If $B_N^1$ and $B_N^2$ occur, then
\begin{align*}
J_{1}&\le \Biggl\llvert \sum_{j=0}^N
\int_{x_{j+1}}^{x_j} \big|\tilde F_{m;N}^{+}(x)-F_m(x)\big|h(dx)
\Biggr\rrvert\\
&\le\sum_{j=0}^N h(c_{-})
2^{-(1+\gamma/4)j}N^{-1/4}\bigl(2^{j+1}-2^j\bigr)\le
C_5 N^{-1/4}.
\end{align*}
Thus,
$
\pr[J_1\le C_5 N^{-1/4}]\ge\pr[B_N^1\cap B_N^2]\to1
$
and $J_1\stackrel{P}{\longrightarrow}0$.

Similarly, $J_3\stackrel{P}{\longrightarrow}0$.

Combining these bounds with (\ref{EqHap}), we accomplish the proof.
\end{proof}

\begin{proof}[Proof of Theorem \ref{Th4}]
This theorem is a simple corollary of Theorem \ref{Th2} and the
continuity theorem for weak convergence (Theorem 3B in Chapter 1 of
\cite{BorovkovMs}). \end{proof}

\begin{proof}[Proof of Theorem \ref{Th5}]
Since $\tilde g_N$ and $\tilde g_{kl;N}^m$ are consistent,
$\tilde
\varSigma_N\stackrel{P}{\longrightarrow}\varSigma$. Similarly,
the continuity of~$T'$ and consistency of $\tilde g_N$ imply
$T'(\tilde g_N)\stackrel{P}{\longrightarrow}T'(\bar g)$. Then,
with \mbox{$\det D\not=0$} in mind, we obtain
$\hat D_N^{-1}\stackrel{P}{\longrightarrow} D^{-1}$.

Denote $\tilde s_N=N\hat T_N^TD^{-1}\hat T_N$. By Theorem \ref{Th4} and
the continuity theorem,\allowbreak \mbox{$\tilde
s_N\stackrel{d}{\longrightarrow}\chi^2_L$.}
By Theorem
\ref{Th4} $\sqrt{N}\hat T_N$ is stochastically bounded. Thus,
\[
\big|\tilde s_N-\hat s_N\big|=\big|\sqrt{N}\hat T_N^T
\bigl(D^{-1}-\hat D_N^{-1}\bigr) (\sqrt{N}\hat
T_N)\big|\stackrel{P} {\longrightarrow} 0.
\]
Therefore, $\hat s_N$ converges in distribution to the same limit as
$\tilde s_N$, that is, to $\chi_L^2$. \end{proof}


%

\begin{thebibliography}{99}

\bibitem{Autin:TestDensities}
\begin{barticle}
\bauthor{\bsnm{Autin}, \binits{F.}},
\bauthor{\bsnm{Pouet}, \binits{Ch.}}:
\batitle{Test on the components of mixture densities}.
\bjtitle{Stat. Risk. Model.}
\bvolume{28}(\bissue{4}),
\bfpage{389}--\blpage{410}
(\byear{2011}).
\bcomment{\MR{2877572}}.
doi:\doiurl{10.1524/strm.2011.1065}
\end{barticle}
\OrigBibText
Autin, F., Pouet, Ch.: Test on the components of mixture
densities. Statistics \& Risk Modelling \textbf{28}, No 4,
389--410 (2011)
\endOrigBibText
\bptok{structpyb}
\endbibitem

\bibitem{BorovkovPr}
\begin{bbook}
\bauthor{\bsnm{Borovkov}, \binits{A.A.}}:
\bbtitle{Probability Theory}.
\bpublisher{Gordon and Breach Science Publishers},
\blocation{Amsterdam}
(\byear{1998}).
\bcomment{\MR{1711261}}
\end{bbook}
\OrigBibText
Borovkov, A.A.: Probability Theory. Gordon and Breach
Science Publishers, Amsterdam (1998)
\endOrigBibText
\bptok{structpyb}
\endbibitem

\bibitem{BorovkovMs}
\begin{bbook}
\bauthor{\bsnm{Borovkov}, \binits{A.A.}}:
\bbtitle{Mathematical statistics}.
\bpublisher{Gordon and Breach Science Publishers},
\blocation{Amsterdam}
(\byear{1998}).
\bcomment{\MR{1712750}}
\end{bbook}
\OrigBibText
Borovkov, A.A.: Mathematical statistics. Gordon and Breach Science Publishers,
Amsterdam (1998)
\endOrigBibText
\bptok{structpyb}
\endbibitem

\bibitem{Johnson}
\begin{barticle}
\bauthor{\bsnm{Johnson}, \binits{N.L.}}:
\batitle{Some simple tests of mixtures with symmetrical components}.
\bjtitle{Commun. Stat.}
\bvolume{1}(\bissue{1}),
\bfpage{17}--\blpage{25}
(\byear{1973}).
\bcomment{\MR{0315824}}.
doi:\doiurl{10.1080/03610927308827004}
\end{barticle}
\OrigBibText
Johnson, N.L.: Some simple tests of mixtures with symmetrical
components. Commun. Statist. \textbf{1}, No 1., 17--25 (1973)
\endOrigBibText
\bptok{structpyb}
\endbibitem

\bibitem{Liu}
\begin{barticle}
\bauthor{\bsnm{Liu}, \binits{X.}},
\bauthor{\bsnm{Pasarica}, \binits{C.}},
\bauthor{\bsnm{Shao}, \binits{Y.}}:
\batitle{Testing homogeneity in gamma mixture model}.
\bjtitle{Scand. J. Stat.}
\bvolume{20},
\bfpage{227}--\blpage{239}
(\byear{2003}).
\bcomment{\MR{1965104}}.
doi:\doiurl{10.1111/1467-9469.00328}
\end{barticle}
\OrigBibText
Liu, X., Pasarica, C., Shao, Y.: Testing homogeneity in gamma mixture
model. Scand. J. Statist. \textbf{20} 227--239 (2003)
\endOrigBibText
\bptok{structpyb}
\endbibitem

\bibitem{Maiboroda2000}
\begin{barticle}
\bauthor{\bsnm{Maiboroda}, \binits{R.E.}}:
\batitle{A test for the homogeneity of mixtures with varying concentrations}.
\bjtitle{Ukr. J. Math.}
\bvolume{52}(\bissue{8}),
\bfpage{1256}--\blpage{1263}
(\byear{2000}).
\bcomment{\MR{1819720}}.
doi:\doiurl{10.1023/A:1010305121413}
\end{barticle}
\OrigBibText
Maiboroda, R.E.: A Test for the homogeneity of mixtures with varying
concentrations.
Ukr. J. Math., \textbf{52}, No 8, 1256--1263 (2000)
\endOrigBibText
\bptok{structpyb}
\endbibitem

\bibitem{Maiboroda2003}
\begin{bbook}
\bauthor{\bsnm{Maiboroda}, \binits{R.}}:
\bbtitle{Statistical Analysis of Mixtures}.
\bpublisher{Kyiv University Publishers},
\blocation{Kyiv}
(\byear{2003}).
\bcomment{(in Ukrainian)}
\end{bbook}
\OrigBibText
Maiboroda, R.: Statistical Analysis of Mixtures. Kyiv
University Publishers, Kyiv (in Ukrainian) (2003)
\endOrigBibText
\bptok{structpyb}
\endbibitem

\bibitem{Maiboroda:StatisticsDNA}
\begin{barticle}
\bauthor{\bsnm{Maiboroda}, \binits{R.}},
\bauthor{\bsnm{Sugakova}, \binits{O.}}:
\batitle{Statistics of mixtures with varying concentrations with application to DNA microarray data analysis}.
\bjtitle{J. Nonparametr. Stat.}
\bvolume{24}(\bissue{1}),
\bfpage{201}--\blpage{205}
(\byear{2012}).
\bcomment{\MR{2885834}}.
doi:\doiurl{10.1080/10485252.2011.630076}
\end{barticle}
\OrigBibText
Maiboroda, R., Sugakova, O.: Statistics of mixtures with varying
concentrations with application to DNA microarray data analysis.
Journal of Nonparametric Statistics. \textbf{24}, No 1, 201--205
(2012)
\endOrigBibText
\bptok{structpyb}
\endbibitem

\bibitem{Maiboroda:AdaptMVC}
\begin{barticle}
\bauthor{\bsnm{Maiboroda}, \binits{R.E.}},
\bauthor{\bsnm{Sugakova}, \binits{O.V.}},
\bauthor{\bsnm{Doronin}, \binits{A.V.}}:
\batitle{Generalized estimating equations for mixtures with varying concentrations}.
\bjtitle{Can. J. Stat.}
\bvolume{41}(\bissue{2}),
\bfpage{217}--\blpage{236}
(\byear{2013}).
\bcomment{\MR{3061876}}.
doi:\doiurl{10.1002/cjs.11170}
\end{barticle}
\OrigBibText
Maiboroda, R.E., Sugakova, O.V., Doronin, A.V.: Generalized
estimating equations for mixtures with varying concentrations. The
Canadian Journal of Statistics \textbf{41}, No 2, 217--236 (2013)
\endOrigBibText
\bptok{structpyb}
\endbibitem

\bibitem{MK2005}
\begin{barticle}
\bauthor{\bsnm{Majboroda}, \binits{R.}},
\bauthor{\bsnm{Kubajchuk}, \binits{O.}}:
\batitle{Improved estimates for moments by observations from mixtures}.
\bjtitle{Theory Probab. Math. Stat.}
\bvolume{70},
\bfpage{83}--\blpage{92}
(\byear{2005}).
doi:\allowbreak\doiurl{10.1090/S0094-9000-05-00642-3}
\end{barticle}
\OrigBibText
Majboroda, R., Kubajchuk, O.: Improved estimates for moments by
observations from mixtures.
Theory Probab. Math. Stat. \textbf{70}, 83--92 (2005)
\endOrigBibText
\bptok{structpyb}
\endbibitem

\bibitem{McLachlan}
\begin{bbook}
\bauthor{\bsnm{McLachlan}, \binits{G.J.}},
\bauthor{\bsnm{Peel}, \binits{D.}}:
\bbtitle{Finite Mixture Models}.
\bpublisher{Wiley-Interscience},
\blocation{New York}
(\byear{2000}).
\bcomment{\MR{1789474}}.
doi:\doiurl{10.1002/0471721182}
\end{bbook}
\OrigBibText
McLachlan, G.J., Peel, D. Finite Mixture Models.
Wiley-Interscience (2000)
\endOrigBibText
\bptok{structpyb}
\endbibitem

\bibitem{Shcherbina}
\begin{barticle}
\bauthor{\bsnm{Shcherbina}, \binits{A.}}:
\batitle{Estimation of the mean value in the model of mixtures with varying concentrations}.
\bjtitle{Theory Probab. Math. Stat.}
\bvolume{84},
\bfpage{151}--\blpage{164}
(\byear{2012}).
\bcomment{\MR{2857425}}.
doi:\doiurl{10.1090/S0094-9000-2012-00866-1}
\end{barticle}
\OrigBibText
Shcherbina, A.: Estimation of the mean value in the model of
mixtures with varying concentrations.
Theor. Probability and Math. Statist.
\textbf{84} 151--164. (2012)
\endOrigBibText
\bptok{structpyb}
\endbibitem

\bibitem{Titterington:FiniteMixture}
\begin{bbook}
\bauthor{\bsnm{Titterington}, \binits{D.M.}},
\bauthor{\bsnm{Smith}, \binits{A.F.}},
\bauthor{\bsnm{Makov}, \binits{U.E.}}:
\bbtitle{Analysis of Finite Mixture Distributions}.
\bpublisher{Wiley},
\blocation{New York}
(\byear{1985})
\end{bbook}
\OrigBibText
Titterington, D.M., Smith, A.F., Makov, U.E. Analysis of Finite
Mixture Distributions. Wiley, New York (1985)
\endOrigBibText
\bptok{structpyb}
\endbibitem



\end{thebibliography}
\end{document}